\newtheorem{theorem}{Theorem}[section]
\newtheorem{corollary}[theorem]{Corollary}
\newtheorem{lemma}[theorem]{Lemma}
\newtheorem{prop}[theorem]{Proposition}
\theoremstyle{definition}
\newtheorem{example}[theorem]{Example}
\newtheorem{remark}[theorem]{Remark}
\newenvironment{romenum}
{ 

\begin{enumerate}}{\end{enumerate}}
\newcommand{\Z}{\mathbb{Z}}
\newcommand{\Q}{\mathbb{Q}}
\newcommand{\R}{\mathbb{R}}
\newcommand{\C}{\mathbb{C}}
\renewcommand{\k}{\Bbbk}
\newcommand{\RR}{{\mathcal R}}
\DeclareMathOperator{\im}{im}
\DeclareMathOperator{\id}{id}
\DeclareMathOperator{\Hom}{{Hom}}
\DeclareMathOperator{\inter}{{int}}
\newcommand{\surj}{\twoheadrightarrow}
\newcommand{\isom}{\xrightarrow{\,\simeq\,}}
\newcommand{\abs}[1]{\left| #1 \right|}
\begin{document}

\title[Algebraic monodromy and obstructions to formality]{%
Algebraic monodromy and obstructions to formality}

\author[Stefan Papadima]{Stefan Papadima$^1$}
\address{Institute of Mathematics Simion Stoilow, 
P.O. Box 1-764,
RO-014700 Bucharest, Romania}
\email{Stefan.Papadima@imar.ro}
\thanks{$^1$Partially supported by CNCSIS grant 
ID-1189/2009-2011 of the Romanian Ministry of 
Education and Research}

\author[Alexander~I.~Suciu]{Alexander~I.~Suciu$^2$}
\address{Department of Mathematics,
Northeastern University,
Boston, MA 02115, USA}
\email{a.suciu@neu.edu}
\urladdr{http://www.math.neu.edu/\~{}suciu}
\thanks{$^2$Partially supported by NSA grant H98230-09-1-0012, 
and an ENHANCE grant from Northeastern University}

\subjclass[2000]{Primary
20J05,  
57M07. 
Secondary
20F34,  
55P62.  
}

\keywords{Fibration, monodromy, formality, cohomology jumping loci, 
link, singularity.}

\begin{abstract}
Given a fibration over the circle, we relate the eigenspace 
decomposition of the algebraic monodromy, the homological 
finiteness properties of the fiber, and the formality properties 
of the total space.  In the process, we prove a more 
general result about iterated group extensions. As an 
application, we obtain new criteria for formality of spaces, 
and $1$-formality of groups, illustrated by bundle 
constructions and various examples from  
low-dimensional topology and singularity theory.
\end{abstract}
\maketitle

\section{Introduction}
\label{sect:intro}

\subsection{Formality}
\label{subsec:intro1}
In his seminal paper on rational homotopy theory \cite{Su}, 
Sullivan  isolated an important topological property of spaces.  
A connected, finite-type CW-complex $X$ is said to be 
{\em formal}\/ if its minimal model is quasi-isomorphic to 
$(H^*(X,\Q),0)$; in particular, the rational homotopy type 
of $X$ is determined by its rational cohomology ring.  
Examples of formal spaces include compact 
K\"{a}hler manifolds \cite{DGMS} and complements of 
complex hyperplane arrangements \cite{Br}.    

There is a related property of groups.  A finitely generated 
group $G$ is said to be {\em $1$-formal}\/ if its Malcev completion 
(in the sense of Quillen \cite{Qu}) is a quadratic complete 
Lie algebra; see for instance \cite{PS04} for more details. 
If the first Betti number of $G$ is $0$ or $1$, then $G$ is $1$-formal.
Other examples of $1$-formal groups include finitely generated 
Artin groups \cite{KM}, finitely presented Bestvina-Brady 
groups \cite{PS-bb}, as well as pure welded braid groups \cite{BP}. 
As noted in \cite{Su}, fundamental groups of formal spaces 
are $1$-formal; thus, K\"{a}hler groups and arrangement 
groups belong to this class. In general, smooth complex 
quasi-projective varieties are not formal, but fundamental 
groups of complements of complex projective hypersurfaces 
are always $1$-formal \cite{Ko}.  

A well-known obstruction to formality is provided by 
the Massey products. If $X$ is a formal space, then 
all such products (of order $3$ and higher) in 
$H^*(X,\Q)$ vanish, up to indeterminacy. Similarly, 
if $G$ is a $1$-formal group, all higher-order Massey 
products in $H^2(G,\Q)$ vanish. 

\subsection{Algebraic monodromy}
\label{subsec:intro2}
In this note, we develop a new formality criterion, based 
on the interplay between three ingredients. 

The first ingredient is a close connection between 
the algebraic monodromy action on the homology of an 
infinite cyclic Galois cover, and the Aomoto--Betti numbers 
associated to the cohomology ring of the base.  This 
connection, established in \cite{PS-spectral} and adapted 
to our context in Section \ref{sect:mono}, requires no 
formality assumption. 

A second general fact, noted by Dwyer and Fried in 
\cite{DF} and further refined in \cite{PS-bns}, is that 
the homological finiteness properties (over $\C$) 
of a free abelian Galois cover are controlled by the 
jump loci for the twisted Betti numbers of the base, 
with rank $1$ complex coefficients. 

Finally, $1$-formality comes into play, by allowing an intimate 
relation between the Aomoto--Betti numbers and the twisted 
Betti numbers in degree one.  This relation is based on a key 
result from our joint work with A.~Dimca, \cite{DPS-jump}.

For a group $G$, denote by $b_i(G):=\dim_{\C} H_i(G,\C)$ 
its Betti numbers.  Our main result (proved in 
Section \ref{sect:proof thm}) reads as follows.

\begin{theorem}
\label{thm:main}
Suppose given two group extensions, 
\begin{align}
\xymatrixcolsep{14pt}
&\xymatrix{1\ar[r]& N\ar[r]& G \ar^{\nu}[r]& \Z \ar[r]&1}, 
\tag{$\xi_{\nu}$} \label{xinu}\\
&\xymatrix{1\ar[r]& K\ar[r]& \pi  \ar^{\chi}[r]&G \ar[r]& 1},
\tag{$\xi_{\chi}$}  \label{xichi}
\end{align}
such that
\begin{romenum}
\item \label{hyp1}
$b_1(N)<\infty$ and $b_1(K)<\infty$;
\item \label{hyp2}
$\pi$ is finitely presented and $1$-formal. 
\end{romenum}
Then, if the monodromy action of $1\in \Z$ on $H_1(N, \C)$ 
has eigenvalue $1$, the corresponding Jordan blocks are 
all of size $1$. 
\end{theorem}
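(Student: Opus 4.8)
The plan is to first peel off the extension $\xi_\chi$, reducing everything to a single extension of $\Z$ by a $1$-formal group, and then run the three-part mechanism advertised in the introduction.

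\textbf{Reduction via $\xi_\chi$.} Set $\psi := \nu\circ\chi\colon \pi\surj\Z$ and $M:=\ker\psi = \chi^{-1}(N)$. Restricting $\chi$ gives a short exact sequence $1\to K\to M\to N\to 1$, while $\psi$ fits into $1\to M\to\pi\to\Z\to 1$. Since $b_1(K)<\infty$ and $b_1(N)<\infty$, the five-term exact sequence of $1\to K\to M\to N\to1$ bounds $b_1(M)\le b_1(K)+b_1(N)<\infty$ and shows that $\chi$ induces a surjection $H_1(M,\C)\surj H_1(N,\C)$. All of these maps are equivariant for the monodromy $\Z$-action, i.e. they are maps of $\C[\Z]=\C[t^{\pm1}]$-modules; since a surjection of finite-dimensional $\C[t^{\pm1}]$-modules carries the generalized $1$-eigenspace of the source onto that of the target, it suffices to prove that the monodromy $h$ acting on $H_1(M,\C)$ has no Jordan block of size $\ge 2$ at the eigenvalue $1$. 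In other words, I may assume $K=1$ and $G=\pi$, and work with the single extension $1\to M\to\pi\xrightarrow{\psi}\Z\to1$, now with $\pi$ finitely presented and $1$-formal.

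\textbf{The three-step core.} Let $T_\psi=\{\lambda^\psi : \lambda\in\C^*\}$ be the one-dimensional subtorus of $\Hom(\pi,\C^*)$ traced out by $\psi$, whose tangent direction at the identity is the class $\psi\in H^1(\pi,\C)$. First I would invoke the Dwyer--Fried criterion, as refined in \cite{DF, PS-bns} (ingredient 2): because $b_1(M)<\infty$, the intersection $T_\psi\cap\mathcal{V}^1_1(\pi)$ is finite, so in particular $T_\psi\not\subseteq\mathcal{V}^1_1(\pi)$. Next I would use the $1$-formality of $\pi$ (ingredient 3): by the tangent cone theorem of \cite{DPS-jump}, the resonance variety $\mathcal{R}^1_1(\pi)\subseteq H^1(\pi,\C)$ is a finite union of linear subspaces, equal to the tangent cone at $1$ of $\mathcal{V}^1_1(\pi)$, and each positive-dimensional linear component $L$ exponentiates to a subtorus $\exp(L)\subseteq\mathcal{V}^1_1(\pi)$. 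If $\psi$ lay in $\mathcal{R}^1_1(\pi)$, then $\psi\ne0$ would sit in some component $L$ with $\C\psi\subseteq L$, forcing $T_\psi=\exp(\C\psi)\subseteq\exp(L)\subseteq\mathcal{V}^1_1(\pi)$ and contradicting finiteness of $T_\psi\cap\mathcal{V}^1_1(\pi)$. Hence $\psi\notin\mathcal{R}^1_1(\pi)$, i.e. the first Aomoto--Betti number $\beta_1$ of the Aomoto complex $(H^*(\pi,\C),\,\cup\,\psi)$ vanishes. Finally I would feed $\beta_1=0$ into the monodromy--Aomoto dictionary of Section \ref{sect:mono}, derived from \cite{PS-spectral} (ingredient 1): since the number of size-$\ge2$ Jordan blocks of $h$ at the eigenvalue $1$ on $H_1(M,\C)$ is governed by $\beta_1$, its vanishing forces all such blocks to have size $1$, completing the reduced case and hence the theorem.

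\textbf{Main obstacle.} The delicate point is the passage, in the middle step, from finiteness of $T_\psi\cap\mathcal{V}^1_1(\pi)$ to the vanishing of $\beta_1$. This is precisely where $1$-formality is indispensable: without it, a component of $\mathcal{V}^1_1(\pi)$ through the origin need not be a subtorus, and $T_\psi$ could be tangent to such a curved component while meeting it in only finitely many points, so that $\psi\in\mathcal{R}^1_1(\pi)$ and $\beta_1>0$ even though $b_1(M)<\infty$. Pinning down the exact form of the monodromy--Aomoto dictionary of Section~\ref{sect:mono} — enough to conclude ``$\beta_1=0\Rightarrow$ no large eigenvalue-$1$ Jordan blocks'', and to do so compatibly with the $\C[t^{\pm1}]$-module structure used in the reduction — is the other step that requires care; the remaining five-term and equivariance arguments are routine.
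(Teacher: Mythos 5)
Your proposal is correct and follows essentially the same route as the paper: your reduction to the single extension $1\to M\to\pi\to\Z\to 1$ via the $\Lambda$-equivariant surjection $H_1(M,\C)\surj H_1(N,\C)$ and the five-term (Hochschild--Serre) bound on $b_1(M)$ is exactly the content of Lemmas \ref{lem=hs} and \ref{lem=propa}, and your three-step core is the proof of Proposition \ref{prop=special}. The only difference is one of granularity: where the paper cites Theorem \ref{thm:b1ker} (Corollary 6.7 of \cite{PS-bns}) as a black box for the implication $b_1(M)<\infty\Rightarrow\beta_1(\pi,\psi_{\C})=0$, you unpack its proof via the Dwyer--Fried criterion and the tangent cone theorem of \cite{DPS-jump}, which is precisely the outline given in the paper's introduction.
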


The $1$-formality hypothesis is crucial here, even in the 
particular case (treated in Proposition \ref{prop=special}) 
when $\pi=G$ and $\chi=\id_G$.  For example, the 
Heisenberg group of integral, unipotent $3\times 3$ 
matrices is not $1$-formal, yet it arises as an 
extension of $\Z$ by $\Z^2$, with monodromy 
$\left( \begin{smallmatrix} 1 & 1\\ 0 & 1 
\end{smallmatrix} \right)$.  

\subsection{Bundles}
\label{subsec:intro3}

We illustrate Theorem \ref{thm:main} with a variety of 
geometric applications. To start with, we derive in 
Section \ref{sect:bundles} the following consequence.

\begin{theorem}
\label{thm:fgm intro}
Let $U$ be a closed, connected, smooth manifold, and let 
$\varphi\colon U \isom  U$ be a diffeomorphism, with mapping 
torus $U_{\varphi}$. Let $p\colon M\to U_{\varphi}$ be a locally 
trivial smooth fibration, with closed, connected fiber $F$. 
If the operator $\varphi_* \colon H_1(U, \C)\to H_1(U, \C)$ 
has eigenvalue $1$, with a Jordan block of size greater 
than $1$, then $M$ is not a formal space.
\end{theorem}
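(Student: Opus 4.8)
The plan is to derive Theorem \ref{thm:fgm intro} from Theorem \ref{thm:main} by contradiction. Suppose, to the contrary, that $M$ were a formal space. Then, as recalled in Section \ref{subsec:intro1}, its fundamental group $\pi := \pi_1(M)$ would be $1$-formal. I will produce the two group extensions \eqref{xinu} and \eqref{xichi} out of the two bundle structures in the hypothesis, verify that conditions \ref{hyp1} and \ref{hyp2} of Theorem \ref{thm:main} hold, and then read off a contradiction from the conclusion.

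First I would build the extension \eqref{xinu}. The mapping torus $U_{\varphi}$ is a smooth fiber bundle over $S^1$ with fiber $U$, so, setting $G := \pi_1(U_{\varphi})$ and using $\pi_2(S^1)=0$, the homotopy long exact sequence of $U \to U_{\varphi}\to S^1$ collapses to a short exact sequence $1\to N \to G \xrightarrow{\nu} \Z \to 1$ with $N \cong \pi_1(U)$. Under the identification $H_1(N,\C)\cong H_1(U,\C)$, the monodromy action of $1\in\Z$ (induced by conjugation by a lift of the generator) is precisely the map $\varphi_*$, by the very construction of the mapping torus. Next, the locally trivial fibration $p\colon M\to U_{\varphi}$ with connected fiber $F$ gives, via its homotopy long exact sequence, a short exact sequence $1\to K\to \pi \xrightarrow{\chi} G\to 1$, where $K=\im(\pi_1(F)\to \pi_1(M))$ and $\chi=p_*$; this is the extension \eqref{xichi}.

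It remains to check the hypotheses. Since $U$ and $F$ are closed manifolds, they have the homotopy type of finite CW-complexes, so $\pi_1(U)$ and $\pi_1(F)$ are finitely generated; hence $b_1(N)=b_1(\pi_1(U))<\infty$, and, as $K$ is a quotient of $\pi_1(F)$, also $b_1(K)<\infty$, which is \ref{hyp1}. For \ref{hyp2}, the space $M$ is itself a closed manifold, being the total space of a bundle with compact base $U_{\varphi}$ and compact fiber $F$; thus $\pi$ is finitely presented, and it is $1$-formal by our standing assumption. Applying Theorem \ref{thm:main}, we conclude that all Jordan blocks of the monodromy on $H_1(N,\C)$ for the eigenvalue $1$ have size $1$. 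Transporting this through $H_1(N,\C)\cong H_1(U,\C)$ shows that $\varphi_*$ has only size-one Jordan blocks for the eigenvalue $1$, contradicting the hypothesis that a Jordan block of size greater than $1$ is present. Therefore $M$ cannot be formal. I expect the only delicate step to be the bookkeeping that identifies $N$ with $\pi_1(U)$ and the algebraic monodromy with $\varphi_*$; the substantive work is entirely absorbed into Theorem \ref{thm:main}.
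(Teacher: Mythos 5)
Your proposal is correct and follows essentially the same route as the paper: the authors likewise feed the mapping-torus extension $1\to\pi_1(U)\to\pi_1(U_\varphi)\to\Z\to1$ and the extension induced by $p$ into Theorem \ref{thm:main}, concluding (in their Proposition \ref{prop:fgm1}) that $\pi_1(M)$ is not $1$-formal, whence $M$ is not formal. The only cosmetic difference is that you argue by contradiction from formality of $M$, while the paper states the non-$1$-formality of $\pi_1(M)$ directly.
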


This theorem extends a result of Fern\'{a}ndez, Gray, and 
Morgan \cite{FGM}, valid for bundles with fiber $F=S^1$. 
It is unclear to us whether their approach---which relies on 
Massey products as an obstruction to formality---still 
works for a general fiber $F$.

One of the inspirations for this work was the following 
result of Eisenbud and Neumann \cite{EN}: 
If $L$ is a fibered graph multilink, then its 
algebraic monodromy has only $1\times 1$ Jordan 
blocks for the eigenvalue $1$. As noted by these 
authors, though, there do exist fibered links in $S^3$ 
for which larger Jordan blocks do occur. Necessarily, 
then, the complement of such a link is not formal.  
When combined with a result from \cite{PS-bns} on the 
Bieri-Neumann-Strebel invariant, our approach yields 
a formality criterion for closed, orientable $3$-manifolds 
which fiber over the circle, see Corollary \ref{cor:jordan blocks}. 

\subsection{Singularities}
\label{intro:org}

Given a reduced polynomial function $f\colon \C^2\to \C$ 
with $f(\mathbf{0})=0$ and connected generic fiber, 
there are two standard fibrations 
associated with it.  One is the Milnor fibration, 
$S^3_{\epsilon}\setminus K \to S^1$, where $K$ is the 
link at the origin, for $\epsilon >0$ small.   As shown 
by Durfee and Hain \cite{DH}, the link complement is formal. 
Theorem \ref{thm:main} allows us then to recover 
the basic fact that the algebraic monodromy 
has no Jordan blocks of size greater than $1$ for 
the eigenvalue $\lambda=1$. 

The other fibration, 
$f^{-1}(D_{\epsilon}^*) \to D_{\epsilon}^*$, is obtained by 
``base localization." As noted in \cite{ACD, D98}, the algebraic 
monodromy of this fibration can have larger Jordan blocks for 
$\lambda=1$, see Example \ref{ex:dimca}.  It follows that the 
space $f^{-1}(D_{\epsilon}^*)$ may be non-formal.  This 
observation is rather surprising, in view of the well-known 
fact that, upon localizing near $\mathbf{0}\in \C^2$ the 
complement of the curve $\{ f=0\}$, one obtains a space 
homotopy equivalent to the formal space 
$S^3_{\epsilon}\setminus K$. 

\section{Monodromy and the Aomoto complex}
\label{sect:mono}

\subsection{Modules over $\k\Z$}
\label{subsec:kz mod}

Fix a field $\k$.  The group-ring $\k\Z$ may be identified with the 
ring of Laurent polynomials in one variable, $\Lambda=\k[t^{\pm 1}]$. 
Since $\k$ is a field, the ring $\Lambda$ is a principal ideal 
domain.  Thus, every finitely-generated $\Lambda$-module 
$M$ decomposes into a finite direct sum of copies of $\Lambda$ 
(the free part), with torsion modules of the form $\Lambda/(f^i)$, 
where $f$ is an irreducible polynomial in $\Lambda$, and $i\ge 1$ 
is the size of the corresponding Jordan block. Define 
\begin{equation*}
\label{eq:fprimary}
M_f=\bigoplus_{i\ge 1} \big ( \Lambda/(f^i)\big )^{e_i(f)}
\end{equation*}
to be the {\em $f$-primary part} of $M$.  Clearly, the module 
$M_f$ consists of those elements of $M$ annihilated by some 
power of $f$. Hence, the following naturality property holds: 
Every $\Lambda$-linear surjection, $\phi\colon M \surj M'$, 
between finitely generated torsion $\Lambda$-modules 
restricts to a $\Lambda$-linear surjection, 
$\phi_f\colon M_f \surj M'_f$, for all irreducible polynomials 
$f\in \Lambda$.

\subsection{Homology of $\Z$-covers}
\label{subsec:zcov}

Let $X$ be a connected, finite-type CW-complex. Without loss 
of generality, we may assume $X$ has a single $0$-cell, 
say $x_0$. Let $G=\pi_1(X,x_0)$ be the fundamental group. 

Given an epimorphism $\nu\colon G\surj \Z$, let $X^{\nu}\to X$ 
be the  Galois cover corresponding to $\ker (\nu)$. Denote by 
$\k\Z_{\nu}$ the group ring of $\Z$, viewed as a (right) module 
over $\k{G}$ via extension of scalars:  $m\cdot g=m\nu(g)$, for 
$g\in G$ and $m\in \k\Z$.  By Shapiro's Lemma, 
$H_q(X^{\nu},\k)=H_q(X,\k\Z_{\nu})$, 
as modules over $\Lambda=\k\Z= \k [t^{\pm 1}]$. Since $X$ 
is of finite-type, $H_q(X,\k\Z_{\nu})$ is a finitely generated 
$\Lambda$-module, for each $q\ge 0$. 

Denote by $\nu_{\Z}$ the class in $H^1(X,\Z)$ determined 
by $\nu$. Let $\nu_{*}\colon H_1(X,\k)\to H_1(\Z,\k)=\k$ 
be the induced homomorphism. The corresponding 
cohomology class,  $\nu_{\k}\in H^1(X,\k)$, is the image 
of $\nu_{\Z}$ under the coefficient homomorphism $\Z\to \k$. 
We then have 
\begin{equation*}
\label{eq:nusquare}
\nu_{\k} \cup \nu_{\k} = 0\in  H^2(X,\k).
\end{equation*}
Indeed, by obstruction theory, there is a map 
$p\colon X\to S^1$ and a class $\omega\in H^1(S^1,\Z)$ 
such that $\nu_{\Z}=p^*(\omega)$. Hence, 
$\nu_{\Z}\cup\nu_{\Z}=p^*(\omega\cup\omega) =0$,  
and the claim follows by naturality of cup products 
with respect to coefficient homomorphisms. 

As a consequence, left-multiplication by $\nu_{\k}$ turns the 
cohomology ring $H^*( X,  \k)$ into a cochain complex, 
\begin{equation}
\label{eq:aomoto} \tag{*}
(H^*( X,  \k) , \cdot\nu_{\k})\colon 
\xymatrix{
H^0(X,\k) \ar^{\nu_{\k}}[r] & 
H^1(X,\k) \ar^{\nu_{\k}}[r] & 
H^2(X,\k) \ar[r] & \cdots 
}
\end{equation}
which we call the {\em Aomoto complex}\/ of $H^*(X, \k)$, with 
respect to $\nu_{\k}$. 
The {\em Aomoto--Betti numbers}\/ of $X$, with respect to 
the cohomology class $\nu_{\k}\in H^1(X,\k)$, are defined as 
\begin{equation*}
\label{eq:aomoto betti}
\beta_q(X, \nu_{\k}) := 
\dim_{\k} H^q(H^*( X,  \k) , \cdot\nu_{\k})\, .
\end{equation*}

In \cite{PS-spectral}, we defined a spectral sequence, 
starting at $E^{1}_{s,t} (X,\k\Z_{\nu})$, and converging to 
$H_{s+t}(X,\k\Z_{\nu})$. In the process, we investigated the 
monodromy action of $\Z$ on $H_{*}(X,\k\Z_{\nu})$, relating 
the triviality of the action to the exactness of the Aomoto 
complex defined by $\nu_{\k}\in H^1(X,\k)$, as follows. 

\begin{theorem}[\cite{PS-spectral}]
\label{thm:trivial action}
For each $k\ge 0$, the following are equivalent:
\begin{enumerate}
\item \label{m1}
For each $q\le k$, the $\k\Z$-module $H_q(X,\k\Z_{\nu})$ 
is finite-dimensional over $\k$, and contains no $(t-1)$-primary 
Jordan blocks of size greater than $1$.
\item \label{m2}
$\beta_0(X,\nu_{\k})=\cdots =\beta_k(X,\nu_{\k})=0$. 
\end{enumerate}
\end{theorem}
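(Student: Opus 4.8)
The plan is to reduce the equivalence to a Universal-Coefficient computation over the completion of $\Lambda=\k[t^{\pm1}]$ at the maximal ideal $(t-1)$. Write $\widehat\Lambda=\k\brac{t-1}$, a complete discrete valuation ring with uniformizer $\tau:=t-1$ and residue field $\k$. Since $\widehat\Lambda$ is flat over $\Lambda$, base change gives $H_q(X,\widehat\Lambda)=H_q(X,\k\Z_{\nu})\otimes_\Lambda\widehat\Lambda$, and this operation preserves the free part and the $(t-1)$-primary torsion while annihilating every torsion summand prime to $(t-1)$ (such a summand $\Lambda/(g)$ with $g(1)\neq0$ becomes $0$, as $g$ is a unit in the local ring $\widehat\Lambda$). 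As $X$ is of finite type, $H_q(X,\widehat\Lambda)$ is finitely generated over $\widehat\Lambda$ and decomposes as $\widehat\Lambda^{\,b_q}\oplus\bigoplus_{i\ge1}(\widehat\Lambda/\tau^i)^{e_i(q)}$, where $b_q$ is the free $\Lambda$-rank of $H_q(X,\k\Z_{\nu})$ and $e_i(q)$ counts its $(t-1)$-primary Jordan blocks of size $i$. The first move is to record the clean reformulation: condition (\ref{m1}) holds in degree $q$ if and only if $\tau\cdot H_q(X,\widehat\Lambda)=0$, i.e. $H_q(X,\widehat\Lambda)$ is a finite-dimensional $\k$-vector space (equivalently $b_q=0$) on which $\tau$ acts as $0$ (equivalently $e_i(q)=0$ for all $i\ge2$).

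Next I would feed the short exact sequence of coefficient modules $0\to\widehat\Lambda\xrightarrow{\,\tau\,}\widehat\Lambda\to\k\to0$ into the associated long exact sequence
\[
\cdots\to H_{q}(X,\k)\xrightarrow{\ \delta_q\ }H_{q-1}(X,\widehat\Lambda)\xrightarrow{\ \tau\ }H_{q-1}(X,\widehat\Lambda)\xrightarrow{\ \rho_{q-1}\ }H_{q-1}(X,\k)\to\cdots,
\]
where $H_q(X,\k)$ is ordinary homology (the local system $\widehat\Lambda/\tau$ being trivial). A direct rank count yields the Universal Coefficient identity $\dim_\k H_q(X,\k)=b_q+t_q+t_{q-1}$, with $t_q=\sum_{i\ge1}e_i(q)$ the total number of $(t-1)$-blocks in degree $q$. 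Then I would analyze the Bockstein $\bar\delta_q:=\rho_{q-1}\circ\delta_q\colon H_q(X,\k)\to H_{q-1}(X,\k)$: chasing the socle $\im\delta_q=\ker(\tau\mid H_{q-1}(X,\widehat\Lambda))$ through $\rho_{q-1}$ (whose kernel is $\tau\cdot H_{q-1}(X,\widehat\Lambda)$) shows that the socle generator of a size-$i$ block survives $\rho_{q-1}$ precisely when $i=1$, so $\rank\bar\delta_q=t^{=1}_{q-1}$, the number of size-$1$ blocks in degree $q-1$. Combining these,
\[
\beta_q(X,\nu_{\k})\;=\;\dim_\k\ker\bar\delta_q-\rank\bar\delta_{q+1}\;=\;b_q+t^{\ge2}_q+t^{\ge2}_{q-1},\qquad t^{\ge2}_q:=\sum_{i\ge2}e_i(q);
\]
here I am using that the Bockstein homology of $\bar\delta$ in degree $q$ has dimension $\beta_q$, which is the crucial input discussed below.

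With this formula both implications are immediate and hold degree by degree over the range $q\le k$. If all $\beta_q$ vanish for $q\le k$, then nonnegativity of the three summands forces $b_q=0$ and $t^{\ge2}_q=0$ for each such $q$, which is precisely (\ref{m1}). Conversely, if (\ref{m1}) holds for $q\le k$, then $b_q=t^{\ge2}_q=0$ there, and for $q\le k$ the remaining term $t^{\ge2}_{q-1}$ also vanishes because $q-1\le k$; hence $\beta_q=0$, giving (\ref{m2}). (The boundary case $q=0$ is consistent, since $H_0(X,\widehat\Lambda)=\widehat\Lambda/\tau$ forces $\beta_0=0$ unconditionally.)

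The step I expect to be the genuine obstacle is the identification underlying the displayed formula, namely that the Bockstein homology of $\bar\delta$ computes the Aomoto--Betti numbers, equivalently that $\bar\delta$ coincides with the Aomoto differential $\cap\,\nu_{\k}$ on $H_*(X,\k)$ (dual to $\cdot\nu_{\k}$ on $H^*(X,\k)$). This is the assertion that the first Bockstein of the rank-one coefficient extension with monodromy $t=1+\tau$ is cap product with the class $\nu_{\k}\in H^1(X,\k)$ classifying that extension. I would prove it at the chain level, expanding the twisted boundary of $C_*(X,\widehat\Lambda)$ as $\partial_0+\tau\,\partial_1+\tau^2\partial_2+\cdots$, where $\partial_0$ is the untwisted differential and the linear term $\partial_1$ induces $\cap\,\nu_{\k}$ on $H_*(X,\k)$; tracing a lift-apply-$\partial$-divide-by-$\tau$ representative of $\delta$ then shows $\bar\delta=(\partial_1)_*=\cap\,\nu_{\k}$. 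This is exactly the content packaged in the $(t-1)$-adic spectral sequence of \cite{PS-spectral}, whose $E^1$-page is $H_*(X,\k)$ and whose $d^1$ is this Aomoto differential; one may therefore alternatively invoke that spectral sequence and read off the same formula from its $E^2$-to-$E^\infty$ comparison.
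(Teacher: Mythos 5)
Your proof is correct, but note first that this paper contains no proof of Theorem \ref{thm:trivial action} to compare against line by line: the result is imported verbatim from \cite{PS-spectral}, where (as the surrounding text of Section \ref{sect:mono} indicates) it is extracted from a spectral sequence starting at $E^1_{s,t}(X,\k\Z_{\nu})$ and converging to $H_{s+t}(X,\k\Z_{\nu})$, whose $d^1$ is the Aomoto differential. Your route is a genuinely more elementary repackaging of that machinery. Completing at $(t-1)$ is a sound first move: $\widehat{\Lambda}=\k\brac{t-1}$ is flat over $\Lambda$, so $H_q(X,\widehat{\Lambda})$ faithfully records the free rank $b_q$ and the $(t-1)$-primary blocks while killing the irrelevant torsion, and condition (\ref{m1}) in degree $q$ becomes exactly $\tau\cdot H_q(X,\widehat{\Lambda})=0$. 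Your rank computations from the Bockstein sequence of $0\to\widehat{\Lambda}\xrightarrow{\tau}\widehat{\Lambda}\to\k\to 0$ all check out ($\dim_\k H_q(X,\k)=b_q+t_q+t_{q-1}$, and $\rank\bar{\delta}_q=t^{=1}_{q-1}$ since the socle of a block of size $i\ge 2$ lies in $\tau H_{q-1}(X,\widehat{\Lambda})=\ker\rho_{q-1}$), and they yield the exact identity $\beta_q(X,\nu_\k)=b_q+t^{\ge 2}_q+t^{\ge 2}_{q-1}$, from which both implications follow degree by degree over $q\le k$ by nonnegativity of the summands. What the counting cannot supply by itself is the identification of $\bar{\delta}$ with $\cap\,\nu_{\k}$ (dual over the field $\k$ to the Aomoto differential $\cdot\nu_{\k}$), and you flag this honestly as the crux: your $\bar{\delta}$ is precisely the $d^1$ of the spectral sequence of \cite{PS-spectral}, and the step ``$\partial_1$ induces $\cap\,\nu_{\k}$'' in your chain-level sketch requires the cellular diagonal/cap-product computation that constitutes the technical heart of that paper. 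So the two proofs share the same key lemma; yours buys a short, self-contained derivation of exactly this equivalence (with the closed formula for $\beta_q$ as a bonus, sharper than the stated equivalence), while the full $I$-adic spectral sequence of \cite{PS-spectral} buys a single apparatus that also governs higher pages, other primary parts, and the general equivariant setting.
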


\subsection{Kernels of epimorphisms to $\Z$}
\label{subsec:zker}

Similar considerations apply to groups. Let $G$ be a group, and
let $\nu\colon G\surj \Z$ be an epimorphism with kernel $N$. 
Let $X=K(G,1)$ be a classifying space for $G$. Then the 
monodromy action of $\Lambda=\k[t^{\pm 1}]$ on 
$H_1(X, \k\Z_{\nu})= H_1(N, \k)$ is induced by 
conjugation:  the generator $t\in \Lambda$ acts on 
$H_1(N, \k)$ by $(\iota_g)_*$, where $g\in G$ is such 
that $\nu (g)=1$ and $\iota_g$ is the corresponding 
inner automorphism. 

Clearly, the Aomoto complex \eqref{eq:aomoto} depends 
in this case only on $G$ and $\nu_{\k}$; thus, we may 
denote $\beta_q(X, \nu_{\k})$ by $\beta_q(G, \nu_{\k})$.  
In the sequel, we will need the following group-theoretic 
version of Theorem \ref{thm:trivial action}.

\begin{corollary}
\label{cor=gmonotest}
Let $1\to N\to G  \xrightarrow{\,\nu\,} \Z \to 1$ be a group 
extension, with $G$ finitely presented. Then, the following 
are equivalent:
\begin{enumerate}
\item \label{mono1}
The  $\k\Z$-module $H_1(N, \k)$ is finite-dimensional 
over $\k$, and contains no $(t-1)$-primary Jordan block 
of size greater than $1$. 
\item  \label{mono2}
$\beta_1(G, \nu_{\k})=0$.
\end{enumerate}
\end{corollary}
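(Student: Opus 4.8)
The plan is to reduce everything to Theorem~\ref{thm:trivial action}, applied with $k=1$ to a convenient finite-type model for $G$. The subtlety is that a finitely presented group need not admit a finite-type classifying space, so $K(G,1)$ cannot in general be fed directly into that theorem. I would instead fix a finite presentation of $G$ and take $X$ to be the resulting presentation $2$-complex: this is a connected finite (hence finite-type) CW-complex with $\pi_1(X)=G$, so that $\nu$ determines an epimorphism $\pi_1(X)\surj\Z$ with the same kernel $N$ and the same class $\nu_{\k}\in H^1(X,\k)$.

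First I would dispose of the degree-zero conditions, so that Theorem~\ref{thm:trivial action} for $k=1$ asserts precisely the degree-one equivalence of the corollary. Because $\nu$ is onto, there is $g\in G$ with $\nu(g)=1$, whence $\nu_{\k}\neq 0$ in $H^1(X,\k)=\Hom(G,\k)$; thus the map $\cdot\,\nu_{\k}\colon H^0(X,\k)\to H^1(X,\k)$ is injective and $\beta_0(X,\nu_{\k})=0$. On the homology side, $X^{\nu}$ is connected, so $H_0(X,\k\Z_{\nu})=\k=\Lambda/(t-1)$ is a single $(t-1)$-primary block of size one. Hence both conditions in Theorem~\ref{thm:trivial action} at $q=0$ hold automatically, and only the degree-one assertions remain.

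Next I would identify the surviving degree-one invariants of $X$ with the group-theoretic ones. By Shapiro's Lemma $H_1(X,\k\Z_{\nu})=H_1(X^{\nu},\k)$, and since first homology with field coefficients depends only on the fundamental group, this equals $H_1(N,\k)$; the two $\Lambda$-module structures agree, as in both cases $t$ acts by the deck transformation, i.e.\ by conjugation by $g$. For the Aomoto number I would compare cohomology rings through the classifying map $c\colon X\to K(G,1)$. The homomorphism $c^{*}$ is an isomorphism in degrees $\le 1$, is injective in degree $2$ (standard, e.g.\ from the Serre spectral sequence of $\widetilde{X}\to X\to K(G,1)$, whose row $H^{1}(\widetilde{X})$ vanishes), and respects cup products. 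Therefore $\im(\cdot\,\nu_{\k}\colon H^0\to H^1)=\k\,\nu_{\k}$ is the same for $X$ and for $G$, while for any $\alpha\in H^1$ one has $\nu_{\k}\cup_{X}\alpha=c^{*}(\nu_{\k}\cup_{G}\alpha)$, so injectivity of $c^{*}$ in degree $2$ forces $\ker(\cdot\,\nu_{\k})$ on $H^1(X)$ to coincide with $\ker(\cdot\,\nu_{\k})$ on $H^1(G)$. This yields $\beta_1(X,\nu_{\k})=\beta_1(G,\nu_{\k})$.

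With these identifications in place, Theorem~\ref{thm:trivial action} applied to $X$ with $k=1$ translates at once into the equivalence of the two conditions of the corollary. I expect the comparison of Aomoto numbers to be the main obstacle: one must guarantee that passing from the possibly infinite-type space $K(G,1)$ to the finite $2$-complex $X$ leaves $\beta_1$ unchanged, and this hinges exactly on the injectivity of $c^{*}\colon H^2(G,\k)\to H^2(X,\k)$ together with its multiplicativity.
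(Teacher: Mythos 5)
Your proposal is correct and follows essentially the same route as the paper: both take a finite presentation $2$-complex $X$ for $G$, check the degree-zero conditions, and apply Theorem~\ref{thm:trivial action} with $k=1$ after identifying $H_1(X,\k\Z_{\nu})$ with $H_1(N,\k)$ and $\beta_1(X,\nu_{\k})$ with $\beta_1(G,\nu_{\k})$. The paper simply asserts these two identifications, whereas you supply the (correct) justification via Shapiro's Lemma and the injectivity of $c^*$ on $H^2$.
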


\begin{proof}
Let $X$ be a finite presentation $2$-complex for $G$.  
Note that 
\[
H_0( X, \Lambda_{\nu})= \Lambda/(t-1) \text{ and }\,
\beta_0 ( X, \nu_{\k})=0.
\] 
Applying Theorem \ref{thm:trivial action} to $X$, for $k=1$, 
we find that $H_1( X, \Lambda_{\nu})$ is finite-dimensional 
over $\k$, with trivial monodromy action on the $(t-1)$-primary 
part, if and only if $\beta_1 ( X, \nu_{\k})=0$.  But 
$H_1( X, \Lambda_{\nu}) = H_1(N, \k)$ and 
$\beta_1 ( X, \nu_{\k})=\beta_1 ( G, \nu_{\k})$, 
and we are done. 
\end{proof}

\section{Monodromy and formality}
\label{sect:proof thm}

This section is devoted to proving Theorem \ref{thm:main} 
from the Introduction.  We first prove the special case 
in which $\pi=G$ and $\chi=\id_G$, and then reduce 
the general case to the special one, by means of  
two lemmas. 

\subsection{A special case}
\label{subsec:special}
We start by recalling a result (Corollary 6.7 
from \cite{PS-bns}), relating the homological 
finiteness properties of certain normal subgroups 
of a $1$-formal group to the vanishing of the 
corresponding Aomoto--Betti numbers. 

\begin{theorem}[\cite{PS-bns}]
\label{thm:b1ker}
Let $G$ be a finitely generated, $1$-formal group. 
Suppose $\nu\colon G\surj \Z$ is an epimorphism, 
with kernel $N$.  Then $b_1 (N) <\infty$ if and only if 
$\beta_1(G, \nu_{\C})=0$.
\end{theorem}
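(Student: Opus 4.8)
The plan is to translate both conditions in the equivalence into the language of cohomology jumping loci, and then to bridge them using the tangent-cone formula available for $1$-formal groups. Write $T_G=\Hom(G,\C^*)$ for the character torus of $G$, let $\mathcal{V}^1_1(G)=\{\rho\in T_G: \dim_\C H^1(G,\C_\rho)\ge 1\}$ be the first characteristic variety, and let $\RR^1_1(G)=\{\omega\in H^1(G,\C):\beta_1(G,\omega)\ne 0\}$ be the first resonance variety, where $\beta_1(G,\omega)$ is the degree-one Aomoto--Betti number of the Aomoto complex $(H^*(G,\C),\cdot\omega)$ (well defined for every $\omega$, since $\omega\cup\omega=0$ in characteristic zero). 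By these definitions, $\beta_1(G,\nu_\C)=0$ is exactly the statement $\nu_\C\notin\RR^1_1(G)$. The epimorphism $\nu$ induces a monomorphism of character tori $\nu^*\colon\C^*\inj T_G$, whose image is the $1$-dimensional subtorus $\exp(\C\cdot\nu_\C)$.

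First I would invoke a homological finiteness criterion of Dwyer--Fried type (\cite{DF}, refined in \cite{PS-bns}): the $\Z$-cover determined by $\nu$ satisfies $b_1(N)<\infty$ if and only if the intersection $\nu^*(\C^*)\cap\mathcal{V}^1_1(G)$ is finite. Since $\nu^*(\C^*)$ is an irreducible curve and $\mathcal{V}^1_1(G)$ is Zariski closed, this intersection is either all of $\nu^*(\C^*)$ or a finite set; hence $b_1(N)<\infty$ if and only if $\nu^*(\C^*)\not\subseteq\mathcal{V}^1_1(G)$. At this point both sides of the asserted equivalence have been recast as non-membership statements, one about $\mathcal{V}^1_1(G)$ and one about $\RR^1_1(G)$.

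The bridge between the two loci is where $1$-formality enters, through the tangent-cone theorem of Dimca--Papadima--Suciu \cite{DPS-jump}: for a finitely generated $1$-formal group, the exponential map restricts to an isomorphism of analytic germs $(\RR^1_1(G),0)\isom(\mathcal{V}^1_1(G),1)$. Granting this, I would argue that $\nu^*(\C^*)\subseteq\mathcal{V}^1_1(G)$ if and only if $\nu_\C\in\RR^1_1(G)$. Indeed, if $\nu_\C\in\RR^1_1(G)$, then $z\nu_\C\in\RR^1_1(G)$ for all $z$ (resonance varieties are cones), so for small $z$ the points $\exp(z\nu_\C)$ lie in $\mathcal{V}^1_1(G)$; being a Zariski-dense subset of the irreducible curve $\nu^*(\C^*)$, they force $\nu^*(\C^*)\subseteq\mathcal{V}^1_1(G)$. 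Conversely, if $\nu^*(\C^*)\subseteq\mathcal{V}^1_1(G)$, then $\exp(z\nu_\C)\in\mathcal{V}^1_1(G)$ for small $z$, and the germ isomorphism (with injectivity of $\exp$ near $0$) yields $z\nu_\C\in\RR^1_1(G)$, whence $\nu_\C\in\RR^1_1(G)$. Chaining the equivalences gives
\[
b_1(N)<\infty\ \Longleftrightarrow\ \nu^*(\C^*)\not\subseteq\mathcal{V}^1_1(G)\ \Longleftrightarrow\ \nu_\C\notin\RR^1_1(G)\ \Longleftrightarrow\ \beta_1(G,\nu_\C)=0 .
\]

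The hard part is the tangent-cone theorem itself: it is the sole place the $1$-formality hypothesis is used, and without it the germ of $\RR^1_1$ at $0$ and the germ of $\mathcal{V}^1_1$ at $1$ need not match, so the two non-membership conditions could genuinely diverge (as the Heisenberg example in the Introduction shows). A secondary technical issue is that $G$ is only assumed finitely generated, whereas the Dwyer--Fried criterion and the jumping-loci formalism are cleanest for finite-type spaces; I would dispose of this by observing that $b_1(N)$, $\mathcal{V}^1_1(G)$, and $\beta_1(G,\nu_\C)$ depend only on a presentation of $G$ together with its degree $\le 2$ cohomology, so one may replace $K(G,1)$ by a two-complex realizing $G$, or simply appeal to the versions of these statements already formulated for finitely generated groups.
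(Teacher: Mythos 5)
The paper does not prove this statement itself---it imports it verbatim as Corollary 6.7 of \cite{PS-bns}---but your argument reconstructs precisely the intended proof, using the two ingredients the Introduction attributes to it: the Dwyer--Fried type finiteness criterion (finiteness of $b_1(N)$ versus finiteness of $\nu^*(\C^*)\cap\mathcal{V}^1_1(G)$) and the tangent-cone isomorphism of germs $(\RR^1_1(G),0)\simeq(\mathcal{V}^1_1(G),1)$ from \cite{DPS-jump}, with the cone property of resonance and irreducibility of the subtorus correctly bridging the two non-membership conditions. The argument is correct and essentially the same as the cited source's.
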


We are now ready to prove an important particular 
case of Theorem \ref{thm:main}.

\begin{prop}
\label{prop=special}
Let $G$ be a finitely presented, $1$-formal group, 
and let $\nu\colon G \surj \Z$ be an epimorphism,   
with kernel $N$. If $b_1(N)<\infty$, then the 
eigenvalue $1$ of the monodromy action of $1\in \Z$ on 
$H_1(N, \C)$ has only $1\times 1$ Jordan blocks.
\end{prop}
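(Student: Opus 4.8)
The plan is to chain together the two results just recalled, Theorem~\ref{thm:b1ker} and Corollary~\ref{cor=gmonotest}, both of which apply directly to the group $G$ and the epimorphism $\nu$. The key observation is that both results pivot on the same vanishing condition, $\beta_1(G,\nu_{\C})=0$, which therefore serves as the bridge between the finiteness hypothesis and the Jordan-block conclusion.

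\emph{First}, I would invoke Theorem~\ref{thm:b1ker} with field $\k=\C$. Since $G$ is assumed to be finitely presented, it is in particular finitely generated, and it is $1$-formal by hypothesis; the epimorphism $\nu\colon G\surj\Z$ has kernel $N$, and we are given $b_1(N)<\infty$. Theorem~\ref{thm:b1ker} then yields $\beta_1(G,\nu_{\C})=0$.

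\emph{Second}, I would feed this vanishing into Corollary~\ref{cor=gmonotest}, again with $\k=\C$. The extension $1\to N\to G\xrightarrow{\nu}\Z\to 1$ has $G$ finitely presented, so the corollary applies. The implication \eqref{mono2}$\Rightarrow$\eqref{mono1} tells us that $H_1(N,\C)$ is finite-dimensional over $\C$ and, crucially, contains no $(t-1)$-primary Jordan block of size greater than $1$. Finally, I would translate this statement into the language of the proposition: as recorded in Subsection~\ref{subsec:zker}, the monodromy action of $1\in\Z$ on $H_1(N,\C)$ is exactly multiplication by $t$, so a $(t-1)$-primary Jordan block of size greater than $1$ is precisely a Jordan block of size greater than $1$ for the eigenvalue $1$. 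The absence of such blocks is the desired conclusion.

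The proof is essentially a two-step composition, so there is no serious obstacle once the right results are in hand; the only point requiring care is to confirm that the hypotheses of each black box are genuinely met, in particular that ``finitely presented'' supplies the ``finitely generated'' needed for Theorem~\ref{thm:b1ker} and the ``finitely presented'' needed for Corollary~\ref{cor=gmonotest}, and that the eigenvalue-$1$ Jordan blocks of the monodromy coincide with the $(t-1)$-primary blocks of the $\Lambda$-module $H_1(N,\C)$.
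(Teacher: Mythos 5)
Your proposal is correct and follows exactly the same two-step route as the paper: apply Theorem~\ref{thm:b1ker} (using $1$-formality and $b_1(N)<\infty$) to get $\beta_1(G,\nu_{\C})=0$, then feed this into Corollary~\ref{cor=gmonotest} with $\k=\C$. The extra care you take in checking the hypotheses and matching the $(t-1)$-primary language with the eigenvalue-$1$ statement is sound but does not change the argument.
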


\begin{proof}
Since $G$ is $1$-formal and $b_1(N)< \infty$, 
Theorem \ref{thm:b1ker} insures that $\beta_1(G, \nu_{\C})=0$. 
The conclusion follows from Corollary \ref{cor=gmonotest}, 
for $\k=\C$. 
\end{proof}

\subsection{Two lemmas}
\label{subsec:lemmas}
Before proceeding, recall the extensions \eqref{xinu} and 
\eqref{xichi} from Section \ref{subsec:intro2}.  Set 
$\Gamma:=\ker (\nu\circ \chi)$. We then have an extension 
\[
\xymatrix{1\ar[r]& \Gamma \ar[r]& \pi \ar^{\nu\circ \chi}[r]& \Z \ar[r]&1}.  
\tag{$\xi_{\nu\chi}$} \label{xinuchi}
\]
Pulling back \eqref{xichi} to $N$ gives rise to another extension, 
\[
\xymatrix{1\ar[r]& K \ar[r]& \Gamma \ar^{\eta}[r]& N \ar[r]&1}. 
\tag{$\xi_{\eta}$} \label{xieta}
\]

The first lemma is a standard application of the Hochschild--Serre 
spectral sequence of a group extension, as recorded for instance 
in Brown \cite[p.~171]{B}. For the sake of completeness, 
we include a proof. 

\begin{lemma}
\label{lem=hs}
If both $b_1(N)$ and $b_1(K)$ are finite, so is $b_1(\Gamma)$.
\end{lemma}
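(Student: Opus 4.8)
The plan is to apply the five-term exact sequence in homology associated to the Hochschild--Serre spectral sequence of the extension \eqref{xieta}, taken with coefficients in $\C$. For the extension $1\to K\to \Gamma\xrightarrow{\eta} N\to 1$, the spectral sequence has $E^2_{pq}=H_p(N,H_q(K,\C))$ and converges to $H_{p+q}(\Gamma,\C)$; its five-term exact sequence reads
\[
H_2(\Gamma,\C)\to H_2(N,\C)\to \big(H_1(K,\C)\big)_N\to H_1(\Gamma,\C)\to H_1(N,\C)\to 0,
\]
where $\big(H_1(K,\C)\big)_N$ denotes the module of coinvariants for the induced conjugation action of $N$ on $H_1(K,\C)$.

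First I would discard everything to the left of the coinvariants term and retain only the exact portion
\[
\big(H_1(K,\C)\big)_N\to H_1(\Gamma,\C)\to H_1(N,\C)\to 0.
\]
This exhibits $H_1(N,\C)$ as a quotient of $H_1(\Gamma,\C)$, with kernel equal to the image of the map out of the coinvariants term. Hence the dimension of $H_1(\Gamma,\C)$ is controlled by the two outer terms.

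Next I would bound dimensions. The module $\big(H_1(K,\C)\big)_N$ is a quotient of $H_1(K,\C)$, so its dimension over $\C$ is at most $b_1(K)$, which is finite by hypothesis. Combining this with the finiteness of $b_1(N)$, exactness yields
\[
b_1(\Gamma)=\dim_\C H_1(\Gamma,\C)\le \dim_\C \big(H_1(K,\C)\big)_N + b_1(N)\le b_1(K)+b_1(N)<\infty,
\]
which is the desired conclusion.

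I do not expect a serious obstacle here: the argument is entirely formal. The only point requiring minor care is the identification of the left-hand term of the five-term sequence with the $N$-coinvariants $H_0(N,H_1(K,\C))$, together with the elementary observation that a module of coinvariants is a quotient, hence of dimension bounded by $b_1(K)$. This is exactly the standard fact about group extensions recorded in Brown \cite{B}.
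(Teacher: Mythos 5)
Your proof is correct and follows essentially the same route as the paper: both arguments come down to the Hochschild--Serre spectral sequence of the extension $(\xi_\eta)$, bounding $\dim_\C H_1(\Gamma,\C)$ by the dimensions of $E^2_{01}=H_1(K,\C)_N$ (a quotient of $H_1(K,\C)$) and $E^2_{10}=H_1(N,\C)$. Your use of the five-term exact sequence is just a repackaging of the same low-degree terms, so there is nothing substantive to change.
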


\begin{proof}
Consider the spectral sequence of extension \eqref{xieta}, 
with $\Gamma$-trivial coefficients $\C$: 
\[
E^2_{st}= H_s(N, H_t(K, \C)) \Rightarrow H_{s+t}(\Gamma, \C) .
\]
Clearly, $E^2_{01}= H_1(K,\C)_N$ is a quotient of 
$H_1(K,\C)$, and $E^2_{10}= H_1(N,\C)$.  Thus, 
both $E^2_{01}$ and $E^2_{10}$ are finite-dimensional 
over $\C$.  Hence, $H_{1}(\Gamma, \C)$ is also 
finite-dimensional. 
\end{proof}

The second lemma shows that the Jordan blocks of 
size greater than $1$ for the eigenvalue $1$ propagate 
from \eqref{xinu} to \eqref{xinuchi}. As usual, write 
$\Lambda=\C\Z=\C[t^{\pm 1}]$. 

\begin{lemma}
\label{lem=propa}
If the $\Lambda$-module $H_1(\Gamma, \C)_{t-1}$ 
is trivial, then so is the module $H_1(N, \C)_{t-1}$.
\end{lemma}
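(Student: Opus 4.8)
The plan is to produce a surjection of $\Lambda$-modules from $H_1(\Gamma,\C)$ onto $H_1(N,\C)$ that is compatible with the two monodromy actions, and then invoke the naturality of the primary decomposition recorded in Section \ref{subsec:kz mod}. The natural candidate is the homomorphism $\eta_*\colon H_1(\Gamma,\C)\to H_1(N,\C)$ induced by the surjection $\eta\colon \Gamma\surj N$ appearing in extension \eqref{xieta}. Since $\eta$ is onto and abelianization is right-exact, $\eta_*$ is surjective.

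First I would verify that $\eta_*$ is $\Lambda$-linear. Recall from Section \ref{subsec:zker} that the generator $t$ acts on $H_1(N,\C)$ as $(\iota_g)_*$, where $g\in G$ satisfies $\nu(g)=1$, and on $H_1(\Gamma,\C)$ as $(\iota_{\tilde g})_*$, where $\tilde g\in\pi$ satisfies $(\nu\circ\chi)(\tilde g)=1$. Choosing such a lift $\tilde g$ and setting $g:=\chi(\tilde g)$, one has $\nu(g)=1$, and for every $x\in\Gamma$ the identity $\eta(\tilde g\,x\,\tilde g^{-1})=\chi(\tilde g)\,\chi(x)\,\chi(\tilde g)^{-1}=g\,\eta(x)\,g^{-1}$ holds. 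Thus $\eta$ intertwines conjugation by $\tilde g$ on $\Gamma$ with conjugation by $g$ on $N$, and therefore $\eta_*$ commutes with the $t$-action.

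Next I would confirm that both $H_1(\Gamma,\C)$ and $H_1(N,\C)$ are finitely generated torsion $\Lambda$-modules, so that the naturality statement genuinely applies. By hypothesis $b_1(N)<\infty$, while Lemma \ref{lem=hs} (under the standing assumption that $b_1(K)<\infty$) gives $b_1(\Gamma)<\infty$; hence both homology groups are finite-dimensional over $\C$. A finite-dimensional $\C$-vector space on which $t$ acts invertibly is automatically a finitely generated torsion $\Lambda$-module.

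With these ingredients in hand, the naturality property from Section \ref{subsec:kz mod}, applied to the surjection $\eta_*$ with irreducible polynomial $f=t-1$, yields a restricted surjection $H_1(\Gamma,\C)_{t-1}\surj H_1(N,\C)_{t-1}$. If the source is trivial, so is the target, which is precisely the assertion of the lemma. The only genuinely delicate point is the equivariance verification in the second step, where one must track the lift $\tilde g$ and its image $g=\chi(\tilde g)$ carefully; the remainder follows formally from the principal-ideal-domain structure of $\Lambda$ developed earlier.
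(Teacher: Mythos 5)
Your proposal is correct and follows essentially the same route as the paper: both arguments use the surjection $\eta_*\colon H_1(\Gamma,\C)\surj H_1(N,\C)$, verify its $\Lambda$-equivariance by intertwining conjugation by a lift $\tilde g\in\pi$ with conjugation by $g=\chi(\tilde g)\in G$, and then invoke the naturality of $(t-1)$-primary parts from Section \ref{subsec:kz mod}. The only (immaterial) difference is that the paper picks $g$ first and then a lift $x$ with $\chi(x)=g$, whereas you pick the lift first.
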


\begin{proof}
Pick $g\in G$ such that $\nu (g)=1$, and then $x\in \pi$ such 
that $\chi(x)=g$.  Clearly, $\iota_g \circ \eta= \eta\circ \iota_x$. 
It follows that the surjection
\[
\eta_* \colon H_1(\Gamma, \C) 
\surj H_1(N, \C)
\]
is $\Lambda$-linear with respect to the corresponding 
monodromy actions. 

Now, since both $H_1(\Gamma, \C)$ and $H_1(N, \C)$ 
are finitely generated, torsion $\Lambda$-modules, we infer from 
the discussion in Section \ref{subsec:kz mod} that $\eta$ induces 
a $\Lambda$-linear surjection on $(t-1)$-primary parts, 
\[
(\eta_*)_{t-1} \colon H_1(\Gamma, \C)_{t-1} \surj 
H_1(N, \C)_{t-1}.
\]
The claim follows.
\end{proof}

\subsection{Proof of Theorem \ref{thm:main}}
\label{subsec:prooff main}

The finite-dimensionality assumption \eqref{hyp1},    
together with Lemma \ref{lem=hs} imply that $b_1(\Gamma)<\infty$.  

Suppose that the monodromy action on 
$H_1(N, \C)$ coming from extension \eqref{xinu} has 
eigenvalue $1$, with a Jordan block of size greater than $1$. 
By Lemma \ref{lem=propa}, the same holds for the monodromy action 
on $H_1(\Gamma, \C)$, coming from extension \eqref{xinuchi}. 

Finally, we use the formality assumption \eqref{hyp2}.   
From Proposition \ref{prop=special}, applied to extension \eqref{xinuchi}, 
we deduce that all Jordan blocks of the monodromy action 
on $H_1(\Gamma, \C)$ have size $1$.  But this is a contradiction, 
and the proof is finished.   
\hfill\qed

\section{Bundle constructions} 
\label{sect:bundles}

Let $U$ be a closed, connected, smooth manifold, and let 
$\varphi\colon U \isom  U$ be a diffeomorphism, with mapping 
torus $U_{\varphi}$. Let $p\colon M\to U_{\varphi}$ be a locally 
trivial smooth fibration, with closed, connected fiber $F$. 
Theorem \ref{thm:fgm intro} follows from the next proposition.

\begin{prop}
\label{prop:fgm1}
If the operator $\varphi_* \colon H_1(U, \C)\to H_1(U, \C)$ 
has eigenvalue $1$, with a Jordan block of size greater 
than $1$, then $\pi_1 (M)$ is not a $1$-formal group.
\end{prop}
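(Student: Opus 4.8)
The plan is to package the geometric data into the two group extensions demanded by Theorem~\ref{thm:main} and then argue by contradiction. First I would extract from the mapping torus fibration $U\to U_{\varphi}\to S^1$ its homotopy exact sequence. Since $U$ is connected, this yields a group extension
\[
1\longrightarrow \pi_1(U)\longrightarrow \pi_1(U_{\varphi})\xrightarrow{\,\nu\,}\Z\longrightarrow 1,
\]
which I take to be \eqref{xinu}, with $N=\pi_1(U)$ and $G=\pi_1(U_{\varphi})$. The crucial point is that the monodromy action of $1\in\Z$ on $H_1(N,\C)=H_1(U,\C)$---defined in Section~\ref{subsec:zker} by conjugation with a lift $g$ satisfying $\nu(g)=1$---is exactly the operator $\varphi_*$; this is the standard reconciliation of the algebraic monodromy with the geometric monodromy of a mapping torus. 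Thus the hypothesis on $\varphi_*$ translates verbatim into the assertion that $1\in\Z$ acts on $H_1(N,\C)$ with a Jordan block of size greater than $1$ for the eigenvalue~$1$.

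Next I would feed in the bundle $p\colon M\to U_{\varphi}$. Because $F$ is connected, the homotopy exact sequence of this fibration gives the short exact sequence
\[
1\longrightarrow K\longrightarrow \pi_1(M)\xrightarrow{\,\chi\,}\pi_1(U_{\varphi})\longrightarrow 1,
\]
where $K$ is the image of $\pi_1(F)$ in $\pi_1(M)$. This is \eqref{xichi}, with $\pi=\pi_1(M)$. I would then check the finiteness hypothesis \eqref{hyp1}: the group $N=\pi_1(U)$ is finitely generated since $U$ is closed, so $b_1(N)=b_1(U)<\infty$; and $K$ is a quotient of the finitely generated group $\pi_1(F)$ (again because $F$ is closed), whence $b_1(K)<\infty$ as well.

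The remaining structural input is that $\pi_1(M)$ is finitely presented. This holds because $M$ is itself a closed manifold: the mapping torus $U_{\varphi}$ of a diffeomorphism of the closed manifold $U$ is closed, and $M$ is a fiber bundle over it with closed fiber $F$, hence compact and without boundary. With this in hand I would conclude by contradiction. Suppose $\pi_1(M)$ were $1$-formal. Then hypotheses \eqref{hyp1} and \eqref{hyp2} of Theorem~\ref{thm:main} are both met for the pair of extensions constructed above, so the theorem forces every Jordan block for the eigenvalue~$1$ of the monodromy on $H_1(N,\C)$ to have size~$1$. This contradicts the hypothesis, as translated in the first paragraph, and therefore $\pi_1(M)$ is not $1$-formal.

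The step I expect to demand the most care is the identification of the algebraic monodromy of \eqref{xinu} with the homological operator $\varphi_*$: the conjugation action of Section~\ref{subsec:zker} is a priori defined only up to the basepoint-dependent choice of the lift $g$, so one must verify that on $H_1(U,\C)$ it agrees with $\varphi_*$ and, in particular, that the eigenvalue-$1$ Jordan-block data is preserved under this identification. Everything else is a routine invocation of the homotopy exact sequences and of the finite presentability of fundamental groups of closed manifolds.
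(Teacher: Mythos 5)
Your proposal is correct and follows essentially the same route as the paper's proof: both extract the extension $1\to\pi_1(U)\to\pi_1(U_\varphi)\to\Z\to 1$ from the mapping torus, identify the algebraic monodromy with $\varphi_*$, obtain the second extension from $p\colon M\to U_\varphi$ with $K$ a quotient of $\pi_1(F)$, and invoke Theorem~\ref{thm:main}. The paper states these steps more tersely (e.g.\ it simply asserts that the monodromy action coincides with $\varphi_*$), but there is no substantive difference.
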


\begin{proof}
The homotopy exact sequence of the mapping torus fibration, 
$U\hookrightarrow U_{\varphi} \to S^1$, yields an extension 
of fundamental groups, $1\to N\to G  \to \Z \to 1$.   
Clearly, $b_1(N)=b_1(U)< \infty$, and the monodromy 
action of $1\in \Z$ on $H_1(N,\C)$ coincides with $\varphi_*$.

Similarly, the fibration $F\hookrightarrow M \xrightarrow{\, p\, } 
U_{\varphi}$ yields an epimorphism on fundamental groups, 
$p_{\sharp} \colon\pi \surj G$. The group $K:= \ker (p_{\sharp})$ 
is a quotient of $\pi_1(F)$; hence, $b_1(K)< \infty$.

Plainly, the group $\pi=\pi_1(M)$ is finitely presented. 
Given the assumption on $\varphi_*$, Theorem \ref{thm:main}  
implies that $\pi$ is not $1$-formal.
\end{proof}

For a class $b\in H^*(U,\Z)$, denote by $b_{\R}$ its image in $H^*(U,\R)$. 
Proposition \ref{prop:fgm1} gives a substantial extension of the 
following result of Fern\'{a}ndez, Gray, and Morgan, contained 
in Proposition 9 and Note 10 from \cite{FGM}.  

\begin{theorem}[\cite{FGM}]
\label{thm:fgm}
Let $\varphi$ be an orientation-preserving diffeomorphism
of a closed, connected, oriented manifold $U$. Suppose 
there is an element $b\in H^1(U,\Z)$ with $\varphi^*(b)=b$, 
such that  $0\ne b_{\R}\in \im(\varphi^*-\id)$. 
If $E\to U_{\varphi}$ is a circle bundle, then $E$ is not formal. 
\end{theorem}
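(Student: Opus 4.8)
The plan is to deduce Theorem \ref{thm:fgm} directly from Proposition \ref{prop:fgm1} by showing that the linear-algebra hypotheses in the former imply the Jordan block hypothesis of the latter. First I would set up the circle bundle $E \to U_\varphi$ as a special case of the fibration $p\colon M \to U_\varphi$ with fiber $F = S^1$, so that Proposition \ref{prop:fgm1} applies once we verify that $\varphi_*$ has a Jordan block of size greater than $1$ for the eigenvalue $1$. Since $\pi_1(E)$ not being $1$-formal forces $E$ to be non-formal (fundamental groups of formal spaces are $1$-formal, as noted in the Introduction), establishing the Jordan block condition completes the argument.

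The heart of the matter is the translation of the cohomological hypothesis into a homological statement about $\varphi_*$. The hypothesis supplies a class $b \in H^1(U,\Z)$ fixed by $\varphi^*$ with $0 \ne b_\R \in \im(\varphi^* - \id)$ acting on $H^1(U,\R)$. I would dualize: since $U$ is a closed oriented manifold and $\varphi$ is orientation-preserving, the operator $\varphi_*$ on $H_1(U,\R)$ is (up to the canonical duality pairing) the transpose-inverse of $\varphi^*$ on $H^1(U,\R)$, and in particular the two operators have the same characteristic and minimal polynomials. Working over $\C$ by extension of scalars, the key point is the following piece of linear algebra: for an operator $A$ on a finite-dimensional space, the eigenvalue $1$ has all Jordan blocks of size $1$ if and only if $\ker(A-\id) \cap \im(A-\id) = 0$. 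The hypothesis produces a nonzero vector $b_\R$ lying in $\ker(\varphi^* - \id)$ (because $\varphi^*(b) = b$) and simultaneously in $\im(\varphi^* - \id)$, which is precisely a witness to a Jordan block of size $\ge 2$ for the eigenvalue $1$ of $\varphi^*$.

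I expect the main obstacle to be bookkeeping the duality cleanly: one must check that the presence of a size $\ge 2$ Jordan block for $\varphi^*$ on $H^1$ forces the same for $\varphi_*$ on $H_1$, which follows because dual operators (and hence $\varphi_*$ versus the transpose of $\varphi^*$) share their Jordan structure, and inverting an operator preserves the Jordan block sizes for the fixed eigenvalue $\lambda = 1$. A subtlety worth flagging is that the hypothesis is stated over $\R$ with an integral class $b$, whereas Proposition \ref{prop:fgm1} is phrased over $\C$; this is harmless, since Jordan block sizes for a real eigenvalue of a real operator are unchanged under complexification, so the nonzero real vector $b_\R$ remains a valid witness after tensoring with $\C$.

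Once the Jordan block condition on $\varphi_*$ is in hand, the conclusion is immediate: Proposition \ref{prop:fgm1} gives that $\pi_1(E)$ is not $1$-formal, and therefore $E$ is not a formal space. In fact, this recovers and strictly generalizes the Fern\'andez--Gray--Morgan result, since our argument works for an arbitrary closed connected fiber $F$ rather than only $F = S^1$, which is exactly the improvement captured by Theorem \ref{thm:fgm intro}.
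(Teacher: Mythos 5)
Your derivation is correct, but note that the paper itself offers no proof of Theorem \ref{thm:fgm}: the statement is quoted from \cite{FGM}, where it is established via non-vanishing Massey products, and the paper merely asserts that Proposition \ref{prop:fgm1} is a ``substantial extension'' of it. What you have done is make that implication explicit, and the argument is sound: $\varphi^*(b)=b$ puts $b_{\R}$ in $\ker(\varphi^*-\id)$, the hypothesis puts it in $\im(\varphi^*-\id)$, and a nonzero vector in $\ker(A-\id)\cap\im(A-\id)$ is precisely a witness to a Jordan block of size at least $2$ for the eigenvalue $1$; this property passes to $\varphi_*$ on $H_1(U,\C)$ by duality and complexification, and Proposition \ref{prop:fgm1} (with $F=S^1$) then gives that $\pi_1(E)$ is not $1$-formal, hence that $E$ is not formal. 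One small correction: via the Kronecker pairing, $\varphi^*$ on $H^1(U,\R)$ is the transpose of $\varphi_*$ on $H_1(U,\R)$, not the transpose-inverse, and no orientability or Poincar\'e duality is needed for this step; as you yourself observe, either relation preserves the Jordan structure at the eigenvalue $1$, so the conclusion is unaffected. The contrast between the two routes is exactly the point of this section: the Massey-product argument of Fern\'andez--Gray--Morgan is tailored to circle bundles, whereas the monodromy/$1$-formality argument behind Proposition \ref{prop:fgm1} applies to an arbitrary closed connected fiber, which is the improvement recorded in Theorem \ref{thm:fgm intro}.
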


\begin{remark}
\label{rem:fgm}
When $U$ is a symplectic manifold, and $\varphi$ is a 
symplectic diffeomorphism, the circle bundle $E\to U_{\varphi}$ 
from above can be chosen so that $E$ carries a symplectic 
structure \cite[Theorem 4]{FGM}.   Yet, by Theorem \ref{thm:fgm} 
and the main result of \cite{DGMS}, the manifold $E$ 
admits no K\"{a}hler metric. 
\end{remark}

In a similar vein, the particular case of Theorem \ref{thm:main}
proved in Section \ref {sect:proof thm} has the following 
geometric counterpart.

\begin{corollary}
\label{prop:1jordan}
Let $F\hookrightarrow X\to S^1$ be a smooth fibration, 
with monodromy $h$.  Assume the fiber $F$ is connected and 
has the homotopy type of a CW-complex with finite $2$-skeleton.  
If the algebraic monodromy, $h_*\colon H_1(F, \C) \to H_1(F, \C)$, 
has eigenvalue $1$, with a Jordan block of size greater than $1$, 
then the group $G=\pi_1(X)$ is not $1$-formal.
\end{corollary}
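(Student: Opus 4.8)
The plan is to deduce Corollary \ref{prop:1jordan} directly from Theorem \ref{thm:main} by extracting two group extensions from the given fibration and verifying the hypotheses. First I would pass to fundamental groups. The fibration $F\hookrightarrow X\to S^1$ is a fibration over the circle, so its monodromy $h$ realizes $X$ (up to homotopy) as the mapping torus $F_h$. The homotopy exact sequence of this fibration yields an extension
\[
\xymatrix{1\ar[r]& N\ar[r]& G \ar^{\nu}[r]& \Z \ar[r]&1},
\]
where $G=\pi_1(X)$, the map $\nu$ is induced by the projection $X\to S^1$, and $N$ is the image of $\pi_1(F)$ in $G$. This is the extension \eqref{xinu}. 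The key identification is that the monodromy action of $1\in\Z$ on $H_1(N,\C)$ is, up to the induced map, the algebraic monodromy $h_*$; so a Jordan block of size greater than $1$ for the eigenvalue $1$ of $h_*$ produces the same phenomenon on $H_1(N,\C)$, provided $H_1(N,\C)$ captures $H_1(F,\C)$ faithfully.

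Second, I would supply the auxiliary extension \eqref{xichi} in the cheapest possible way, namely by taking $\pi=G$ and $\chi=\id_G$, so that $K=1$ and $\Gamma=N$. Then hypothesis \eqref{hyp1} reduces to $b_1(N)<\infty$ together with the trivial $b_1(K)=0$, and hypothesis \eqref{hyp2} becomes exactly the statement that $G$ is finitely presented and $1$-formal. In effect this reduces Corollary \ref{prop:1jordan} to Proposition \ref{prop=special}, the special case $\pi=G$. To run the argument by contradiction: assume $G$ \emph{is} $1$-formal; the finiteness of the $2$-skeleton of $F$ gives that $G$ is finitely presented and that $b_1(N)<\infty$; Proposition \ref{prop=special} then forces all Jordan blocks for the eigenvalue $1$ to have size $1$, contradicting the hypothesis on $h_*$. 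Hence $G$ is not $1$-formal.

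The two facts requiring care, and where the main obstacle lies, are the finiteness claims and the precise comparison between $h_*$ on $H_1(F,\C)$ and the monodromy on $H_1(N,\C)$. For finiteness: the hypothesis that $F$ has the homotopy type of a CW-complex with finite $2$-skeleton guarantees that $\pi_1(F)$ is finitely presented and that $b_1(F)<\infty$; since $N$ is a quotient of $\pi_1(F)$, its abelianization is a quotient of $H_1(F,\C)$, so $b_1(N)<\infty$, and $G$, built from $N$ and $\Z$ via a finite amount of data, is finitely presented. The delicate point is that the surjection $\pi_1(F)\surj N$ need not be an isomorphism on $H_1(\,\cdot\,,\C)$, so a priori $h_*$ on $H_1(F,\C)$ and the monodromy on $H_1(N,\C)$ differ. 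The cleanest resolution is to apply the naturality of Section \ref{subsec:kz mod}: the quotient map is $\Lambda$-linear for the two monodromy actions and induces a surjection on $(t-1)$-primary parts, so a size-$>1$ Jordan block on $H_1(F,\C)_{t-1}$ would have to be accounted for on $H_1(N,\C)_{t-1}$ — but this direction goes the wrong way for our needs.

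I therefore expect the genuine work to be a short monodromy-comparison argument, and the smoothest route is to bypass the quotient subtlety entirely by appealing to the homological version of the fibration. Rather than comparing $H_1(F)$ with $H_1(N)$, I would invoke that for a fibration over $S^1$ the infinite cyclic cover $X^\nu$ is homotopy equivalent to $F$, so that as $\Lambda$-modules
\[
H_1(N,\C)=H_1(X^\nu,\C)\cong H_1(F,\C),
\]
with the monodromy action on the left corresponding to $h_*$ on the right; this is precisely the Wang-sequence picture underlying Section \ref{subsec:zcov}. Under this identification the Jordan block structure for the eigenvalue $1$ transports verbatim, and the contradiction with Proposition \ref{prop=special} is immediate. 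The hard part, then, is not any lengthy computation but pinning down this identification cleanly and confirming that the finite-$2$-skeleton hypothesis is exactly what is needed to make both the finite presentability of $G$ and the finiteness $b_1(N)<\infty$ hold; once those are in place, the result is a direct corollary of Theorem \ref{thm:main}.
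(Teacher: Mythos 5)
Your proof is correct and follows essentially the same route as the paper: the paper simply takes the extension $1\to N\to G\to\Z\to 1$ from the homotopy exact sequence of the fibration, notes that the finite $2$-skeleton hypothesis gives $G$ finitely presented and $b_1(N)<\infty$, and invokes Proposition \ref{prop=special}. The quotient subtlety you worry about does not arise here, since $\pi_2(S^1)=0$ forces $\pi_1(F)\to G$ to be injective, so $N=\pi_1(F)$ and $H_1(N,\C)=H_1(F,\C)$ with the conjugation monodromy equal to $h_*$ on the nose.
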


\begin{proof}
Let $1\to N\to G  \to \Z \to 1$ be the extension of fundamental 
groups determined by the fibration.  From the hypothesis on 
the fiber $F$, the group $G$ is finitely presented, and 
$b_1(N)< \infty$.  The conclusion follows from 
Proposition \ref{prop=special}. 
\end{proof}

\section{Links and closed $3$-manifolds} 
\label{sect:3mfd}

\subsection{Links in the $3$-sphere}
\label{subsec:links}
The question of deciding whether a link complement is 
formal has a long history.  In fact, the higher-order products 
were defined by W.~Massey precisely for detecting higher-order 
linking phenomena for links such as the Borromean rings.  

The following example shows how the non-formality 
of a fibered link complement can be detected by the 
algebraic monodromy. It also shows that the $1$-formality 
hypothesis is essential in Theorem \ref{thm:main}.

\begin{figure}[t]
\centering
\includegraphics[height=1.8in,width=1.8in]{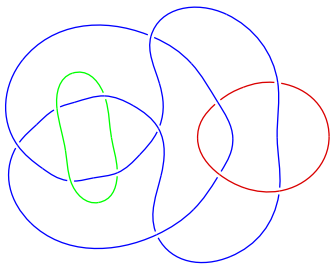} 
\caption{Eisenbud and Neumann's $3$-component link}
\label{fig:en link}
\end{figure}

\begin{example}
\label{ex:en}
Let $L$ be the $3$-component link in $S^3$ depicted in 
Figure \ref{fig:en link}.\footnote{%
This picture was drawn with the help of the Mathematica 
package {\sf KnotTheory}, by Dror Bar-Natan, and the 
graphical package {\sf Knotilus}, by Ortho Flint and Stuart Rankin.}   
As explained in \cite[Example 11.6]{EN}, 
the link exterior, $X=S^3 \setminus (L\times \inter D^2)$, fibers 
over $S^1$, with fiber $F$ a connected, compact surface, and 
with algebraic monodromy having two Jordan blocks of type 
$\left( \begin{smallmatrix} 1 & 1\\ 0 & 1 \end{smallmatrix} \right)$. 
By Corollary \ref{prop:1jordan}, the group $\pi_1(X)$ 
is not $1$-formal.
\end{example}

\subsection{$3$-manifolds fibering over the circle}
\label{subsec:3mfd}
Turning now to closed $3$-manifolds, let us first recall 
the following result from \cite{PS-bns}, based on the 
interplay between the Bieri--Neumann--Strebel invariant, 
$\Sigma^1(G)$, and the (first) resonance variety, 
$\RR^1(G):= \{ z\in \Hom (G, \R) \mid \beta_1(G, z)>0 \}$, 
of a $1$-formal group $G$. 

\begin{prop}[\cite{PS-bns}]
\label{prop:3mfd}
Let $M$ be a closed, orientable $3$-manifold which fibers 
over the circle.  If $b_1(M)$ is even, then $\pi_1(M)$ is 
not $1$-formal.  
\end{prop}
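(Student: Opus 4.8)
The plan is to assume that $G=\pi_1(M)$ is $1$-formal and derive a contradiction by playing the fibration hypothesis against Poincar\'e duality. Set $V=H^1(M,\R)=\Hom(G,\R)$, of dimension $n=b_1(M)$. First I would record what fibering gives at the level of the Bieri--Neumann--Strebel invariant: since $M$ fibers over the circle, there is an epimorphism $\phi\colon G\surj\Z$ whose kernel is finitely generated (the fundamental group of the fiber surface). By Stallings' fibration theorem together with the Bieri--Neumann--Strebel criterion for finite generation of kernels, both $[\phi]$ and $[-\phi]$ lie in $\Sigma^1(G)$, viewed as an open subset of the sphere $S(G)=(\Hom(G,\R)\setminus\{0\})/\R_{>0}$.

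The crux is to show that the parity hypothesis forces every nonzero class into the resonance variety. For $z\in V$ consider the skew-symmetric form $B_z$ on $V$ defined by $B_z(a,b)=\langle z\cup a\cup b,[M]\rangle$. Graded commutativity gives $z\cup z=0$, so $z$ lies in the radical of $B_z$, whence $B_z$ descends to a skew form on $V/\langle z\rangle$, of dimension $n-1$. When $n=b_1(M)$ is even, $n-1$ is odd, so this form is degenerate, producing $y\in V\setminus\langle z\rangle$ with $\langle z\cup y\cup b,[M]\rangle=0$ for all $b\in V$; by Poincar\'e duality (nondegeneracy of the pairing $H^1\times H^2\to H^3=\R$) this yields $z\cup y=0$ in $H^2$. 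Since $z\cup z=0$ as well and $y,z$ are independent, the kernel of $z\cup(-)\colon H^1\to H^2$ has dimension $\ge 2$, while the image of $z\cup(-)\colon H^0\to H^1$ is the line $\langle z\rangle$; hence $\beta_1(G,z)\ge 1$. Here one checks that products of degree-one classes lie in the image of the injection $H^2(G)\to H^2(M)$, so that this indeed computes $\beta_1(G,z)$. Consequently $z\in\RR^1(G)$ for \emph{every} nonzero $z$; in particular $\phi\in\RR^1(G)$.

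Finally I would invoke the resonance obstruction for $1$-formal groups from \cite{PS-bns}: if $G$ is $1$-formal, then $\Sigma^1(G)$ is disjoint from the image of $\RR^1(G)\setminus\{0\}$ in $S(G)$. Since $\phi\in\RR^1(G)$, this forces $[\phi]\notin\Sigma^1(G)$, contradicting the first step; hence $G$ cannot be $1$-formal. The step I expect to be the main obstacle is pinning down the precise $\Sigma^1$-versus-$\RR^1$ comparison for $1$-formal groups and quoting it in exactly the right direction (with the real resonance variety intervening); the Poincar\'e-duality/parity computation and the Stallings--BNS characterization of fibering are routine, but it is in the comparison theorem that the $1$-formality hypothesis is genuinely used.
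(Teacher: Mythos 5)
The paper does not actually prove this proposition---it is quoted from \cite{PS-bns}---so there is no internal proof to compare against. Your reconstruction is correct and is, in substance, the argument of that reference: the Poincar\'e-duality/parity computation showing that when $b_1(M)$ is even every nonzero class lies in $\RR^1(G)$ (so in particular the fibration class $\phi$ does), combined with the fact that fibering with closed fiber places $[\phi]$ and $[-\phi]$ in $\Sigma^1(G)$, while the $1$-formality upper bound $\Sigma^1(G)\subseteq S(G)\setminus S(\RR^1(G,\R))$ from \cite{PS-bns} forbids exactly that.
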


Combining this result with Corollary \ref{prop:1jordan} puts 
strong restrictions on the algebraic monodromy of a fibration 
$M^3\to S^1$.  

\begin{corollary}
\label{cor:jordan blocks}
Let $M$ be a closed, orientable $3$-manifold with 
$1$-formal fundamental group.  Suppose $M$ fibers 
over the circle, and the algebraic monodromy has $1$ 
as an eigenvalue.  Then, there are an even number of 
$1\times 1$ Jordan blocks for this eigenvalue, 
and no higher size Jordan blocks.
\end{corollary}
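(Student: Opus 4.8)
The plan is to treat the two assertions separately, combining Corollary \ref{prop:1jordan} with Proposition \ref{prop:3mfd} via a Betti-number count. Since $M$ is a closed, orientable $3$-manifold fibering over $S^1$, its fiber $F$ is a closed, connected surface, and in particular has the homotopy type of a finite CW-complex, so the fibration $F\hookrightarrow M\to S^1$ meets the hypotheses of Corollary \ref{prop:1jordan}. I first dispose of the higher blocks: were the algebraic monodromy $h_*$ on $H_1(F,\C)$ to have a Jordan block of size greater than $1$ for the eigenvalue $1$, that corollary would force $\pi_1(M)$ to be non-$1$-formal, contrary to hypothesis. Hence every Jordan block for the eigenvalue $1$ has size $1$.

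For the parity statement, let $n_1$ denote the number of Jordan blocks of $h_*$ for the eigenvalue $1$; by the previous paragraph these are precisely the $1\times 1$ blocks. I would compute $b_1(M)$ from the Wang exact sequence of the fibration,
\[
H_1(F,\C) \xrightarrow{\,h_*-\id\,} H_1(F,\C) \xrightarrow{\,\alpha\,} H_1(M,\C) \xrightarrow{\,\beta\,} H_0(F,\C) \xrightarrow{\,h_*-\id\,} H_0(F,\C).
\]
Because $F$ is connected, $h_*-\id$ vanishes on $H_0(F,\C)=\C$, so $\beta$ is onto and $\ker\beta=\im\alpha=\coker\big(h_*-\id\,|_{H_1(F,\C)}\big)$. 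Decomposing $h_*$ into generalized eigenspaces, $h_*-\id$ is invertible on every eigenspace for an eigenvalue $\lambda\ne 1$, whereas on the generalized $1$-eigenspace it is nilpotent, with cokernel of dimension equal to the number of its Jordan blocks. Thus $\dim\coker(h_*-\id)=n_1$, and the exact sequence yields $b_1(M)=n_1+1$.

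The conclusion then follows by a parity argument: since $\pi_1(M)$ is $1$-formal, the contrapositive of Proposition \ref{prop:3mfd} shows that $b_1(M)$ is odd, and the relation $b_1(M)=n_1+1$ forces $n_1$ to be even. As the eigenvalue $1$ is assumed to occur we have $n_1\ge 1$, so in fact $n_1\ge 2$.

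The genuinely load-bearing input is the pair of earlier results; the remaining work is the Wang-sequence bookkeeping. The one point that deserves care—and which I regard as the main (if minor) technical step—is the identification $\dim\coker(h_*-\id)=n_1$, namely that each Jordan block for the eigenvalue $1$ contributes exactly one dimension to the cokernel; everything else is routine linear algebra and the standard exactness of the Wang sequence.
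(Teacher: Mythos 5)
Your proof is correct and follows the paper's own argument exactly: Corollary \ref{prop:1jordan} rules out larger Jordan blocks, the Wang sequence gives $b_1(M)=n_1+1$, and Proposition \ref{prop:3mfd} forces $n_1$ to be even. The only difference is that you spell out the Wang-sequence and cokernel computation that the paper leaves implicit, and you correctly identify $\dim\coker(h_*-\id)$ with the number of Jordan blocks for the eigenvalue $1$.
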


\begin{proof}
By Corollary \ref{prop:1jordan}, the algebraic monodromy 
has only $1\times 1$ Jordan blocks for the eigenvalue $1$. 
Let $m$ be the number of such blocks.   From the Wang 
sequence of the fibration, we deduce that $b_1(M)=m+1$. 
By Proposition \ref{prop:3mfd}, $m$ must be even. 
\end{proof}

\section{Singularities and fibrations} 
\label{sect:sing}

\subsection{Milnor fibration} 
\label{subsec:milnor fibration}
Let $f\colon (\C^{n+1},\mathbf{0})\to (\C,0)$ be a reduced
polynomial map. The link of $f$ at $\mathbf{0}$, denoted 
$K$, is the intersection of the hypersurface $\{f=0\}$ with the sphere 
$S^{2n+1}_{\epsilon} =\{x\in \C^{n+1} \mid \abs{x}=\epsilon \}$, 
for sufficiently small $\epsilon >0$. As shown by Milnor 
in \cite{Mi}, the map 
\begin{equation*}
\label{eq:milnor}
p\colon S^{2n+1}_{\epsilon} \setminus K \to S^1, \quad
p(x)=f(x)/\abs{f(x)}
\end{equation*}
is a smooth fibration. Moreover, the Milnor fiber $F=p^{-1}(1)$ 
has the homotopy type of an $n$-dimensional finite CW-complex.
Let $\varphi\colon F\to F$ be the monodromy map.  
As is well known, the monodromy operator, 
$\varphi_*\colon H_q(F,\C)\to H_q(F,\C)$, has only roots of 
unity as eigenvalues, with corresponding Jordan blocks of 
size at most $q+1$, see for instance \cite[p.~75]{D92}. 

In the case when $f$ has an isolated singularity at $\mathbf{0}$, 
more can be said. First of all, as shown by Milnor, the fiber $F$ 
is $(n-1)$-connected. Moreover, $\varphi_*\colon H_n(F,\C)\to H_n(F,\C)$ 
has Jordan blocks of size at most $n$ for the eigenvalue $1$, 
see for instance \cite{Eb}. 

Particularly tractable is the case of a reduced polynomial map 
$f\colon \C^2 \to \C$ with $f(\mathbf{0})=0$, when $\mathbf{0}$ 
is necessarily an isolated singularity.  As shown by Durfee 
and Hain in \cite[Theorem 4.2]{DH}, the link complement, 
$X=S^3_{\epsilon} \setminus K$, is a formal space. 
Corollary \ref{prop:1jordan}, then, explains from a 
purely topological point of view why the Jordan blocks 
of $\varphi_*\colon H_1(F,\C)\to H_1(F,\C)$ corresponding 
to the eigenvalue $1$ must all have size $1$. 

\subsection{Base localization} 
\label{subsec:base loc}
Let $f\colon \C^2 \to \C$ be a polynomial map 
with connected generic fiber. For $\epsilon >0$, there is a fibration 
$f\colon f^{-1}(D_{\epsilon}^*) \to D_{\epsilon}^*$ induced by $f$, 
where $D_{\epsilon}^*=\{t \in \C \mid 0<\abs{t}<\epsilon\}$; 
the topological type of this fibration is independent of 
small $\epsilon$. 

Now let $F$ be the generic fiber of $f$, and let $\varphi\colon F\to F$ 
be the monodromy of the fibration. Note that $F$ is a finite CW-complex, 
up to homotopy; see for instance \cite[p. 27]{D92}.  It was shown 
in \cite{ACD, D98} that $\varphi_*\colon H_1(F,\C)\to H_1(F,\C)$ 
can have a Jordan block of size $2$ for the eigenvalue $1$.  
The following concrete examples were communicated to us 
by Alex Dimca. 

\begin{example}
\label{ex:dimca}
Consider the polynomial $f=x^3+y^3+xy$. In this case, 
the central fiber $F_0=f^{-1}(0)$ has just a node singularity.
Using Theorem 1 or Corollary 6 from \cite{ACD}, we see that 
$\varphi_*\colon H_1(F,\C)\to H_1(F,\C)$ has a Jordan block of 
type $\left( \begin{smallmatrix} 1 & 1\\ 0 & 1 \end{smallmatrix} \right)$. 

Other examples are suitable translates of the polynomials 
$f_B$ and $f_C$ introduced in \cite[Example 4]{D98}, such that 
the central fiber corresponds to the Kodaira types $I_1$ or $I_2$. 
In this case, the central fibers are smooth, but the 
non-triviality of the monodromy is due to singularities at infinity. 
The polynomial $f_B$ (known as the Brian\c{c}on polynomial) 
is also considered in Example 5 from \cite{ACD}.
\end{example}

Further examples can be found in \cite{MW}. In all such examples, 
we conclude from Corollary \ref{prop:1jordan} that the group 
$\pi_1(f^{-1}(D_{\epsilon}^*))$ is not $1$-formal.

For a non-constant polynomial map, $f\colon \C^{2} \to \C$ 
with $f(\mathbf{0})=0$, let $V=\{f=0\}$ be the corresponding 
plane algebraic curve, and $Y=\C^2 \setminus V$ its complement. 
In recent work, Cogolludo--Agustin and Matei \cite{CM}, and 
Macinic \cite{Ma}, proved, by different methods, that $Y$ is 
a formal space.  Due to the local conic structure of affine 
varieties (see for instance \cite[p. 23]{D92}), for small enough 
$\delta>0$, the intersection 
\[
Y\cap \{ x\in \C^2 \mid \abs{x} \le \delta \}
\]
has the same homotopy type as the link complement 
$S^{3}_{\epsilon} \setminus K$.   In conclusion, the 
formality property of $Y$ is preserved by localization 
near $\mathbf{0} \in \C^2$, but it may be destroyed 
by base localization, that is, when replacing the 
complement $Y$ by 
\[
Y\cap f^{-1} (\{ t\in \C \mid \abs{t}< \epsilon \})= f^{-1}(D^*_{\epsilon}).
\]

\vspace{-2.8pc}
\newcommand{\arxiv}[1]
{\texttt{\href{http://arxiv.org/abs/#1}{arxiv:#1}}}
\renewcommand{\MR}[1]
{\href{http://www.ams.org/mathscinet-getitem?mr=#1}{MR#1}}

\end{document}